\newtheorem{theorem}{Theorem}[section]
\newtheorem{lemma}[theorem]{Lemma}
\theoremstyle{definition}
\DeclareMathOperator{\dif}{d}
\newcommand{\pp}[2]{\frac{\partial{#1}}{\partial{#2}}}
\numberwithin{equation}{section}
\title{A conclusive theorem on Finsler metrics of sectional flag curvature}
\author{Libing Huang \and Zhongmin Shen}
\address[Libing Huang]{School of Mathematical Sciences and LPMC,  Nankai University, 
Tianjin 300071, P.~R.~China}
\email{huanglb@nankai.edu.cn}
\address[Zhongmin Shen]{Department of Mathematical Sciences,
Indiana University-Purdue University,
Indianapolis, IN 46202-3216, USA}
\email{zshen@math.iupui.edu}
\date{\today}
\begin{document}

\begin{abstract}
If the flag curvature of a Finsler manifold reduces to sectional curvature,  then locally either the 
Finsler metric is Riemannian, or the flag curvature is isotropic.
\end{abstract}

\maketitle

\section{Introduction}

Flag curvature is the most important quantity in Finsler geometry.  It is the analogue of sectional curvature
in Riemannian geometry.  

The flag curvature $K$ is a function defined on flags.  At a base point $x$ on the manifold $M$,  a 
flag $(y, P)$ consists of a flagpole $y\in T_xM$ and a two dimensional section $P$ containing $y$. 
If $P$ is spanned by $\{y,v\}$, then the flag curvature $K(y,P)$ can be also written as $K(y,v)$.
For precise definition,  one may consult section \ref{sec:pre} or standard textbooks such 
as \cite{BCS} or \cite{ChernShen}. 

Of particular interest is the study of Finsler metrics of constant flag curvature.  It is a remarkable progress to
classify Randers metrics of constant flag curvature in Bao-Robles-Shen \cite{BRS}.  At the same time,  many new examples
of constantly curved non-Randers metrics are found, see for example \cite{Bryant}, \cite{Shen}, etc.  However,
the knowledge of the local structure of constantly curved Finsler metrics is still rare.

If the flag curvatures of any flags based at $x$ are equal, then the Finsler metric is said to have
\emph{isotropic flag curvature} at $x$.  In dimensions at least three,  there is the Finsler version of Schur's lemma 
asserting that everywhere isotropic flag curvature implies constant flag curvature.

If the flag curvature only depends on flagpoles, then the Finsler metric is said to be of scalar flag curvature, since
in this case the flag curvature is a scalar function on the slit tangent bundle.  If the flag curvature only depends on
sections, but does not depend on flagpoles,  then the Finsler metric is said be of sectional flag curvature.
Clearly,  Riemannian metrics are of sectional flag curvature;  isotropically or constantly curved Finsler metrics 
are also of sectional flag curvature.  These examples are trivial.  Thus it is natural to ask: 

\emph{Are there any nontrivial Finsler metrics with sectional flag curvature?}

Bin Chen and Lili Zhao first studied this problem for Randers metrics.  It is proved in \cite{ChenZhao2010} that
a non-Riemannian Randers metric of sectional flag curvature must be constantly curved if the dimension is greater than two.  
In their proof,   Bao-Robles's curvature formula \cite{BR2004} is crucial,  and the result follows by complicated computation.
Later,  they obtained some Numata type and Akbar-Zadeh type theorems in \cite{ChenZhaoDGA}. 

There are two goals in this short note.  The first goal is to provide a simple proof of Chen-Zhao's result \cite{ChenZhao2010}
on Randers metrics,  which turns out to be essential for the general case.   The second goal is to prove the following 
conclusive result,  which gives a negative answer to the above problem.

\begin{theorem}\label{thm:main}
Let $(M,F)$ be a Finsler manifold of sectional flag curvature.  Then at any point $x$ in $M$, either
the Minkowski norm $F|_x$ is Euclidean,  or the flag curvature is isotropic at $x$.
\end{theorem}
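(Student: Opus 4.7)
The plan is to work pointwise at $x \in M$, where the Minkowski norm $F_x := F|_{T_xM}$, the fundamental tensor $g_y$, and the Jacobi endomorphism $R_y$ carry all the data. The flag curvature reads
\[
K(y, v) = \frac{g_y(R_y v, v)}{g_y(y, y)\, g_y(v, v) - g_y(y, v)^2},
\]
and the sectional hypothesis is that this ratio depends only on the plane $P = \mathrm{span}\{y, v\}$. Assuming the flag curvature is \emph{not} isotropic at $x$, the aim is to show the Cartan tensor vanishes at $x$, so that $F_x$ is Euclidean.

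First I would give the elementary proof of Chen-Zhao's Randers theorem promised in the introduction. For $F = \alpha + \beta$, the Bao-Robles formula expresses $R^i{}_k(y)$ in terms of the Levi-Civita data of $\alpha$ and the covariant derivatives of $\beta$, while the Randers fundamental tensor has an explicit closed form. Substituted into the sectional identity---that $K(y, v)$ depends only on $\mathrm{span}\{y, v\}$---these expressions yield polynomial identities in $y, v$ whose coefficients involve $\alpha$- and $\beta$-tensors; cleanly separating the parts rational versus irrational in $\alpha(y)$ and $\alpha(v)$ should force either $\beta \equiv 0$ or constant flag curvature, sidestepping the more intricate computation of the original paper.

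For the general Minkowski case I would translate the sectional hypothesis into infinitesimal form: for fixed $v \in P$, the map $t \mapsto K(y + tv, v)$ is constant on its domain, so differentiating in $t$ at $t = 0$ produces an algebraic identity coupling the Cartan tensor $C_{ijk}(y)$---which records the $y$-dependence of $g_y$---to the fiber derivative $\partial R^i{}_k / \partial y^j$. Combining this with the symmetry $K(y, v) = K(v, y)$ and with variation of the plane $P$ yields an overdetermined tensor system on $T_x M$. To collapse this system I would import the Randers step via an osculation at each reference direction $y_0$: approximate $F_x$ by a Randers metric built from the inner product $g_{y_0}$ and an auxiliary $1$-form chosen to match the first jet of $F_x$ at $y_0$, transfer the sectional condition to the osculation, and invoke Step~1.

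The principal obstacle is this last step: ensuring that the osculation reduction is tight enough that the Randers dichotomy propagates back to a dichotomy for $F_x$. Since the flag curvature depends nonlinearly on the second-order jet of $F$, matching only the first jet at $y_0$ can lose information, so the right approximation must preserve both $g_y$ and the leading behavior of $R_y$ near $y_0$; doing this uniformly in $y_0$ and assembling the conclusions is exactly where the authors' remark that the Randers case is ``essential for the general case'' manifests in the proof.
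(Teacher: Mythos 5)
Your first move---differentiating $K(y+tv,v)$ in $t$ at $t=0$---is indeed where the paper starts, but the decisive idea that makes the pointwise dichotomy work is missing from your plan, and the step you flag as the ``principal obstacle'' is a genuine gap rather than a technicality. The paper converts the derivative condition (via the Chern--Shen identity relating the vertical derivative of $\mathbf{R}_y$ to $\mathbf{C}_y$ and $\ddot{\mathbf{C}}_y$) into the polynomial identity $\bigl(\mathbf{C}_y(v,v,\mathbf{R}_y(v))-\ddot{\mathbf{C}}_y(v,v,v)\bigr)h_y(v,v)=2\,\mathbf{C}_y(v,v,v)\,\mathbf{R}_y(v,v)$ in $v$, and then exploits that for $\dim M\geq 3$ the quadratic $h_y(v,v)$ is \emph{irreducible}: divisibility forces, for each fixed $y$, either $h_y(v,v)\mid\mathbf{R}_y(v,v)$ (so $y$ is a ``polar'' direction, $\mathbf{R}_y=\kappa F^2 h_y$) or $h_y(v,v)\mid\mathbf{C}_y(v,v,v)$, which means $\mathbf{C}_y$ is C-reducible, i.e.\ the Matsumoto torsion $\mathbf{M}_y$ vanishes. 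A measure/continuity argument then shows that at each point either the polar directions have positive measure (whence, using $K(y_1,y_2)=K(y_2,y_1)$ and a positive-measure zero set of a quadratic form, the flag curvature is isotropic at $x$), or $\mathbf{M}_y\equiv 0$ on $T_xM$, so by Matsumoto's theorem $F|_x$ is \emph{exactly} a Randers norm. This exact reduction is what lets the Randers theorem be applied on the region where it happens; no approximation is involved.

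Your osculation scheme cannot substitute for this. The flag curvature depends on the second-order jet of $F$ in the base variable $x$ (through the spray and $\mathbf{R}_y$), so a Randers norm chosen to match the first fiberwise jet of $F_x$ at a direction $y_0$ carries no information whatsoever about $\mathbf{R}_y$; there is no curvature of the osculating object to which the sectional condition could be ``transferred,'' and Chen--Zhao's theorem is a statement about an honest Randers metric on an open set, whose proof needs the Riemann curvature of that metric. Relatedly, your target ``non-isotropic at $x$ implies the Cartan tensor vanishes at $x$'' is not reached by any direct pointwise argument: in the paper it comes out only after the exact Randers reduction, where plugging the C-reducible form of $\mathbf{C}_y$ back into the same divisibility identity shows that $\mathbf{I}_y\neq 0$ would force every direction to be polar (hence isotropic, then constant by Schur), so non-isotropy kills $\beta$ and leaves a Euclidean norm. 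Finally, your proposed route to the Randers case via the Bao--Robles curvature formula and separating terms rational/irrational in $\alpha$ is essentially the computation-heavy path of the original Chen--Zhao paper, not a proof you have carried out; the paper's point is precisely that the divisibility identity makes that computation unnecessary.
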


Notice that when $\dim M = 2$,  the flag curvature $K$ is automatically a scalar function on the tangent bundle.  
If it does not depend on the flagpole $y$,  then it is a function on $M$;  namely, the flag curvature is isotropic and the theorem is 
trivially true.  In the following we will only consider the case $\dim M \geq 3$.

The paper is organized as follows.  In section \ref{sec:pre},  we review some basic material of Finsler geometry and list
the tools that is needed in context.  In section \ref{sec:lem}, we establish an equation to characterize Finsler metrics
of sectional flag curvature, and explore several consequences of this equation.  Finally we give the proofs of Chen-Zhao's result
and theorem \ref{thm:main} in section \ref{sec:prf}.

\section{Preliminaries}
\label{sec:pre}

Let $M$ be an $n$ dimensional smooth manifold with tangent bundle $TM$.   A \emph{Finsler metric} $F=F(x,y)$ on a 
manifold $M$ is a positive smooth function on $TM\backslash\{0\}$ such that (1) $F(x,\lambda y)=\lambda F(x,y)$ for
any $\lambda>0$; (2) the quadratic form
\[
	g_y = g_{ij}(x,y) dx^i\otimes dx^j,\quad g_{ij} = \frac{1}{2}[F^2]_{y^iy^j}
\]
is positive definite.  Sometimes we write $F(y)$ instead of $F(x,y)$ because the dependence on $x$ is implicitly
declared since the tangent vector $y$ must have a base point $x$.  The restriction of a Finsler metric $F$ to each tangent space
is a Minkowski norm.   In particular, if all the Minkowski norms are Euclidean,  then we recover the definition of a Riemannian 
metric.

Let $(g^{ij})$ be the inverse matrix of $(g_{ij})$.  The \emph{Cartan tensor} 
$\mathbf{C}_y = C_{ijk}\dif x^i\otimes\dif x^j\otimes\dif x^k$ and \emph{mean Cartan tensor} $\mathbf{I}_y = I_i dx^i$ 
are defined respectively by
\[
	C_{ijk} = \frac{1}{4}[F^2]_{y^iy^jy^k} = \frac{1}{2}[g_{ij}]_{y^k},\quad 
	I_i = g^{jk} C_{ijk}.
\]
Clearly, $F$ is a Riemannian metric if and only if $\mathbf{C}_y=0$ holds for all $y$.
A theorem of Deick asserts the condition $\mathbf{I}_y=0$ also characterizes Riemannian metrics (see \cite{BCS}).

The \emph{angular metric} $h_y = h_{ij} dx^i\otimes dx^j$ is defined by
\begin{equation}
	h_{ij} = g_{ij} - F_{y^i}F_{y^j}.
\end{equation}

M. Matsumoto introduced the notion of $\mathbf{C}$-reducible Finsler metrics, which are characterized by the vanishing
of the \emph{Matsumoto torsion} $\mathbf{M}_y=M_{ijk}dx^i\otimes dx^j\otimes dx^k$, where
\begin{equation}
	M_{ijk} = C_{ijk} - \frac{1}{n+1}(h_{ij}I_k + h_{jk}I_i + h_{ki}I_j).
\end{equation}
It is proved in \cite{Matsumoto} that in dimensions greater than two, $F$ is $\mathbf{C}$-reducible, if and only if it is of Randers type, 
namely, $F$ can be written as $\alpha+\beta$, where $\alpha$ is a Riemannian metric and $\beta$ is a one form 
with $|\beta|_{\alpha}<1$.

The spray $G=y^i\pp{}{x^i} - 2G^i\pp{}{y^i}$ is a vector field on $TM\backslash\{0\}$,
where the coefficients $G^i=G^i(x,y)$ are defined by
\[
	G^i = \frac{1}{4}g^{il}\Big\{[F^2]_{x^ky^l}y^k - [F^2]_{x^l}\Big\}.
\]

The importance of spray in Finsler geometry can be illustrated by the following two facts.
The first fact is that the \emph{Riemann curvature tensor} $\mathbf{R}_y = R^i{}_k\pp{}{x^i}\otimes dx^k$ can be
written as a combination of derivatives of $G^i$ as follows
\begin{equation}
	R^i{}_k = 2[G^i]_{x^k} - y^j[G^i]_{x^jy^k} + 2G^j[G^i]_{y^jy^k} - [G^i]_{y^j}[G^j]_{y^k}.
\end{equation}
Notice that $\mathbf{R}_y$ is a $(1,1)$ tensor.  Let $R_{jk} = g_{ij}R^i{}_k$.  Then we also denote the $(0,2)$ 
tensor $R_{jk}dx^j\otimes dx^k$ by $\mathbf{R}_y$, and also call it \emph{Riemann curvature tensor}.  One can
distinguish these two variants by context.  An important property of the $(0,2)$ tensor 
$\mathbf{R}_y$ is that it is symmetric, $R_{jk}=R_{kj}$.

The second fact is that the spray can be used to define a \emph{dynamical derivative}.  For example,
the dynamical derivative of the Cartan tensor is $\dot{\mathbf{C}}_y = C_{ijk|0} dx^i \otimes dx^j \otimes dx^k$, where
\[
	C_{ijk|0} = G(C_{ijk}) - [G^m]_{y^i}C_{mjk} - [G^m]_{y^j}C_{imk} - [G^m]_{y^k}C_{ijm}.
\]
Traditionally, the tensor $\dot{\mathbf{C}}_y$ is also called \emph{Landsberg curvature}.
As other examples,  one can verify that $\dot{g}_y$ and $\dot{h}_y$,  the dynamical derivatives of 
the fundamental tensor $g_y$ and the angular metric $h_y$,  are zero.

Finally,  if a section $P$ is spanned by the flagpole $y$ and another vector $v$, then the flag curvature $K(y,P)$ is given by
\begin{equation}
	K(y,P) = K(y,v) = \frac{\mathbf{R}_y(v,v)}{F(y)^2 h_y(v,v)}.
\end{equation}

The above facts are standard in textbooks (cf. \cite{ChernShen}).  Now we provide some notations that will be used in next section.
The vertical derivative of $\mathbf{R}_y$ will be denoted by $\hat{\mathbf{R}}_y$.  It is defined by 
$\hat{\mathbf{R}}_y = R{}_{ij\cdot k}dx^i\otimes dx^j\otimes dx^k$,  where
\[
	R{}_{ij\cdot k} = [R_{ij}]_{y^k}.
\]
It is easy to see that
\begin{equation}
	\hat{\mathbf{R}}_y(w,u,v) = \frac{d}{dt}\mathbf{R}_{y+tv}(w,u)\Big|_{t=0}.
\end{equation}

To build a relation between $\hat{\mathbf{R}}_y$ and other quantities,  we recall an important identity in \cite{ChernShen}.  
Lemma 2.4.1 in \cite{ChernShen} states that
\begin{equation}
\begin{aligned}
	C_{ijk|0|0} + C_{ijm}R^m_k = &-\frac{1}{3}g_{im}R^m{}_{k\cdot j} - \frac{1}{3}g_{jm} R^m{}_{k\cdot i}\\
		& -\frac{1}{6}g_{im}R^m{}_{j\cdot k} - \frac{1}{6}g_{jm} R^m{}_{i\cdot k},
\end{aligned}
\end{equation}
where $R^i{}_{j\cdot k} = [R^i{}_j]_{y^k}$.  Notice that 
$R_{ik\cdot j} = (g_{im}R^m{}_k)_{\cdot j} = 2C_{ijm}R^m{}_k + g_{im}R^m{}_{k\cdot j}$,  the above 
equation can be rewritten as
\begin{equation}\label{eq:}
\begin{aligned}
	& R_{ik\cdot j} + R_{jk\cdot i} + R_{ij\cdot k}\\  
	=& C_{ijm}R^m_k  + C_{jkm} R^m{}_i + C_{kim} R^m{}_j - 3C_{ijk|0|0}.
\end{aligned}
\end{equation}
We encode this equation into the following index-free form.

\begin{lemma}\label{lem:extR}
$\hat{\mathbf{R}}_y(v,v,v) = \mathbf{C}_y(v,v,\mathbf{R}_y(v)) - \ddot{\mathbf{C}}_y(v,v,v)$.
\end{lemma}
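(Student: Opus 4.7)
The plan is to contract both sides of the displayed identity immediately preceding the lemma with $v^{i}v^{j}v^{k}$ and recognise each resulting scalar. No calculation beyond symmetry bookkeeping is required; the lemma is just a coordinate-free repackaging of the stated identity.

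On the left side I would exploit the symmetry $R_{ij}=R_{ji}$ of the $(0,2)$-Riemann tensor, which the paper has already emphasised. This gives $R_{ik\cdot j}=R_{ki\cdot j}$, and similarly for the other two permutations, so after relabelling dummy indices each of the three terms $R_{ik\cdot j}v^{i}v^{j}v^{k}$, $R_{jk\cdot i}v^{i}v^{j}v^{k}$, $R_{ij\cdot k}v^{i}v^{j}v^{k}$ equals $R_{ab\cdot c}v^{a}v^{b}v^{c}=\hat{\mathbf{R}}_y(v,v,v)$. The left side therefore contributes $3\hat{\mathbf{R}}_y(v,v,v)$.

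On the right side I would use the total symmetry of the Cartan tensor. In each of the three terms $C_{ijm}R^{m}{}_{k}v^{i}v^{j}v^{k}$, $C_{jkm}R^{m}{}_{i}v^{i}v^{j}v^{k}$, $C_{kim}R^{m}{}_{j}v^{i}v^{j}v^{k}$, the factor of the form $R^{m}{}_{\ast}v^{\ast}$ produces the vector $\mathbf{R}_{y}(v)$ with free index $m$, and that index slots into the remaining argument of $\mathbf{C}_{y}$; by total symmetry of $\mathbf{C}_{y}$ it does not matter which of its three slots this is, so all three terms equal $\mathbf{C}_{y}(v,v,\mathbf{R}_{y}(v))$. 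The final term yields $-3C_{ijk|0|0}v^{i}v^{j}v^{k}=-3\ddot{\mathbf{C}}_y(v,v,v)$ directly from the definition of the iterated dynamical derivative. Dividing through by $3$ gives the lemma.

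There is no substantive obstacle: the real work was already done in deriving the identity, and here it is only a matter of recognising the three permutations on each side as repetitions of the same index-free quantity. The only point demanding care is to check that the relabelling really produces the same scalar three times, which, as noted, is the content of the two symmetries $R_{ij}=R_{ji}$ and $C_{ijk}=C_{(ijk)}$ recalled in Section \ref{sec:pre}.
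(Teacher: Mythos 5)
Your proposal is correct and is exactly what the paper does: the lemma is obtained by contracting the displayed identity with $v^iv^jv^k$ and identifying each group of three terms as a single index-free quantity (the paper merely asserts this as an ``encoding''). The only cosmetic remark is that on the left side you do not even need $R_{ij}=R_{ji}$ — relabelling the dummy indices among the commuting factors $v^iv^jv^k$ already makes the three terms coincide.
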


At the end of this section, we prove

\begin{lemma}\label{lem:exth}
$\frac{d}{dt}\Big(F^2(y+tv)h_{y+tv}(v,v)\Big)_{t=0} = 2F^2(y)\mathbf{C}_y(v,v,v)$. 
\end{lemma}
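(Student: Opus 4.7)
The plan is to reduce everything to an expression whose $y$-dependence is visible in terms of $g$ and $C$, then differentiate term by term, relying on Euler homogeneity to cancel nuisance terms.

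First I would rewrite $F^2 h_y(v,v)$ in a form amenable to differentiation. Using the Euler identities $F(y) F_{y^i}(y) = g_{ij}(y) y^j$ and $F^2(y) = g_{ij}(y) y^i y^j$, we have $F(y) F_{y^i}(y) v^i = g_y(y,v)$, hence
\begin{equation*}
F^2(y) h_y(v,v) = F^2(y)\, g_y(v,v) - g_y(y,v)^2.
\end{equation*}
This is the form I want, because the $t$-dependence on the right now only enters through $F^2$, $g$, and the inserted argument $y_t := y + tv$.

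Next I would differentiate each piece at $t=0$, using only the identities $\tfrac{d}{dt}[g_{ij}(y_t)] = 2 C_{ijk}(y_t) v^k$ and $C_{ijk}(y)\,y^i = 0$ (the latter by homogeneity of $g_{ij}$). A direct computation gives
\begin{equation*}
\dd{}{t}\Big|_{t=0} F^2(y_t) = 2\, g_y(y,v),\qquad \dd{}{t}\Big|_{t=0} g_{y_t}(v,v) = 2\, \mathbf{C}_y(v,v,v),
\end{equation*}
and for the cross term
\begin{equation*}
\dd{}{t}\Big|_{t=0} g_{y_t}(y_t,v) = 2 C_{ijk}(y) y^i v^j v^k + g_y(v,v) = g_y(v,v),
\end{equation*}
where the first summand vanishes by $C_{ijk}(y) y^i = 0$. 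Therefore
\begin{equation*}
\dd{}{t}\Big|_{t=0} g_{y_t}(y_t,v)^2 = 2\, g_y(y,v)\, g_y(v,v).
\end{equation*}

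Finally I would assemble these pieces. Applying the Leibniz rule to $F^2(y_t)\, g_{y_t}(v,v)$ and subtracting the derivative of $g_{y_t}(y_t,v)^2$, the term $2\, g_y(y,v)\, g_y(v,v)$ appears with opposite signs and cancels, leaving exactly $2F^2(y)\, \mathbf{C}_y(v,v,v)$. There is no real obstacle here: the only observation with any content is that the cross terms cancel, which is precisely the virtue of the decomposition $h_y = g_y - F_{y^*} \otimes F_{y^*}$ combined with $\mathbf{C}_y(y,\cdot,\cdot) = 0$.
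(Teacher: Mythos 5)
Your proof is correct and is essentially the paper's own argument: a direct differentiation using $FF_{y^i}=g_{ij}y^j$ and $\mathbf{C}_y(y,\cdot,\cdot)=0$, with the same cancellation of the cross terms $2g_y(y,v)g_y(v,v)$. The paper merely phrases it as computing $(F^2h_{ij})_{\cdot k}$ and contracting with $v^iv^jv^k$, while you first rewrite $F^2h_y(v,v)=F^2g_y(v,v)-g_y(y,v)^2$; the content is identical.
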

\begin{proof}
The left hand side tensor has coefficents $(F^2h_{ij})_{\cdot k}$.  Direct computation yields
\begin{align*}
	(F^2 h_{ij})_{\cdot k} &= \big(F^2(g_{ij} - F_{y^i} F_{y^j})\big)_{\cdot k} \\
	&= 2F^2 C_{ijk} + 2FF_{y^k} g_{ij}  - g_{ik} FF_{y^j} - g_{jk} FF_{y^i}.
\end{align*}
Contraction with $v^iv^jv^k$ then gives the desired relation.
\end{proof}

\section{Some lemmas}
\label{sec:lem}

Finsler metrics of sectional flag curvature can be characterized by the following lemma.  An equivalent version
of this lemma can be found in Chen-Zhao \cite{ChenZhaoDGA}.

\begin{lemma}\label{lem:eq}
A Finsler manifold $(M,F)$ is of sectional flag curvature, if and only if
\begin{equation}\label{eq:Rh=CR}
	\big(\mathbf{C}_y(v,v,\mathbf{R}_y(v)) - \ddot{\mathbf{C}}_y(v,v,v)\big)\cdot h_y(v,v) = 2\mathbf{C}_y(v,v,v)\mathbf{R}_y(v,v)
\end{equation}
holds for any $y$, $v$.
\end{lemma}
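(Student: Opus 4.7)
The plan is to directly differentiate $K(y+tv,v)$ in $t$ at $t=0$ and recognize that Lemmas \ref{lem:extR} and \ref{lem:exth} deliver precisely the two pieces of the claimed identity.

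First I would reinterpret the sectional-flag-curvature condition analytically. Since $R_y(y)=0$ and $h_y(y,\cdot)=0$ are standard identities, the expression $K(y,v)=R_y(v,v)/(F^2(y)\,h_y(v,v))$ is already independent of the choice of second vector $v$ within the plane $P=\mathrm{span}\{y,v\}$ (assuming $y,v$ are linearly independent; otherwise both sides of \eqref{eq:Rh=CR} vanish trivially). Hence $F$ is of sectional flag curvature exactly when $K(\tilde y,v)=K(y,v)$ for every flagpole $\tilde y\in P$. Parametrizing $\tilde y=y+tv$ (which, up to scaling, exhausts all flagpoles in $P$ other than $v$ itself), this is equivalent to the one-parameter condition
\[
\frac{d}{dt}K(y+tv,v)=0\qquad\text{for all admissible }t.
\]
Since replacing $y$ by $y+t_0v$ reduces the general $t=t_0$ statement to the $t=0$ statement, it suffices to require the $t=0$ derivative to vanish identically in $(y,v)$.

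Next I would compute this derivative at $t=0$ by the quotient rule. The numerator contributes $\frac{d}{dt}|_{t=0}R_{y+tv}(v,v)=\hat{\mathbf{R}}_y(v,v,v)$, which by Lemma \ref{lem:extR} equals $\mathbf{C}_y(v,v,\mathbf{R}_y(v))-\ddot{\mathbf{C}}_y(v,v,v)$. The denominator derivative $\frac{d}{dt}|_{t=0}\bigl(F^2(y+tv)h_{y+tv}(v,v)\bigr)$ is exactly what Lemma \ref{lem:exth} identifies with $2F^2(y)\mathbf{C}_y(v,v,v)$. Combining these gives
\[
F^4(y)h_y(v,v)^2\cdot\frac{d}{dt}\Big|_{t=0}K = F^2(y)\Bigl[\bigl(\mathbf{C}_y(v,v,\mathbf{R}_y(v))-\ddot{\mathbf{C}}_y(v,v,v)\bigr)h_y(v,v) - 2\mathbf{R}_y(v,v)\mathbf{C}_y(v,v,v)\Bigr].
\]
Dividing by the positive factor $F^2(y)$ shows that the vanishing of the derivative at $t=0$ is equivalent to \eqref{eq:Rh=CR}. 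Both implications follow simultaneously from this equivalence.

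There is no real obstacle in this argument, only a bookkeeping point: one must verify that the pointwise equivalence at $t=0$ really propagates to all $t$. This is the remark that a condition of the form ``the $(y,v)$-derivative vanishes at $t=0$ for every $(y,v)$'' is the same as ``the derivative vanishes for all $t$,'' via the substitution $y\mapsto y+tv$. The remaining subtlety is ensuring $y+tv$ remains a valid flagpole (i.e.\ nonzero) for the range of $t$ considered, which is automatic on the open set where $y$ and $v$ are linearly independent; the linearly dependent case trivializes \eqref{eq:Rh=CR}.
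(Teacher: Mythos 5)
Your proposal is correct and follows essentially the same route as the paper: differentiate $K(y+tv,v)$ at $t=0$, apply the quotient rule, and substitute Lemmas \ref{lem:extR} and \ref{lem:exth}; the equivalence with the sectional-flag-curvature condition (which the paper cites from Chen--Zhao) is the same reduction you spell out via the substitution $y\mapsto y+t_0v$.
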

\begin{proof}
It is easy to see that, $F$ is of sectional flag curvature, if and only if
\[
	\left.\frac{d}{dt}K(y+tv,v)\right|_{t=0} = 0
\] 
holds for any linearly independent vectors $y$ and $v$ (see Lemma 2.2 in \cite{ChenZhaoDGA}).  
Since $K(y,v) = \frac{\mathbf{R}_y(v,v)}{F^2(y) h_y(v,v)}$,  the above equation can be expanded to get
\[
	\left.\frac{d}{dt}\mathbf{R}_{y+tv}(v,v)\right|_{t=0}\cdot F^2h_y(v,v) = \frac{d}{dt}\Big(F^2(y+tv)h_{y+tv}(v,v)\Big)_{t=0} \mathbf{R}_y(v,v).
\]
Substituting the results of lemma \ref{lem:extR} and lemma \ref{lem:exth} into the above relation, 
we proved (\ref{eq:Rh=CR}).
\end{proof}

Now, fix a nonzero tangent vector $y$ in $T_xM$.  
If there is a constant $\kappa=\kappa(y)$ such that $\mathbf{R}_y(v,v) = \kappa(y) F^2(y) h_y(v,v)$ holds for any $v$ in $T_xM$, 
then we call $y$ a \emph{polar direction}.

\begin{lemma}\label{lem:key}
Let $(M,F)$ be a Finsler manifold of sectional flag curvature with $\dim M\geq 3$.  For each fixed $x$ in $M$,
every nonzero vector $y$ in $T_xM$ must satisfy at least one of the following two conditions: (1) $y$ is a polar direction 
or (2) the Matsumoto torsion $\mathbf{M}_y$ vanishes.
\end{lemma}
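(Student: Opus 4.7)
The plan is to view equation (\ref{eq:Rh=CR}) as a polynomial identity in $v$ and exploit the algebraic irreducibility of the positive definite form $h_y$. Let $V = \{v \in T_xM : g_y(v, y) = 0\}$, of dimension $n-1 \geq 2$. Since $C_{ijk}$, $R_{ij}$, $h_{ij}$ and $\ddot{C}_{ijk}$ all vanish when contracted with $y$ in any slot, both sides of (\ref{eq:Rh=CR}) are unchanged under $v \mapsto v + \lambda y$, so one may restrict $v$ to $V$.

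Choose a $g_y$-orthonormal basis of $V$ with coordinates $v^1, \ldots, v^{n-1}$. The identity (\ref{eq:Rh=CR}) becomes a polynomial identity
\[
A(v)\, h(v) = 2\, C(v)\, R(v)
\]
in the polynomial ring $\mathbb{R}[v^1, \ldots, v^{n-1}]$, where $A(v)$ and $C(v) = \mathbf{C}_y(v,v,v)$ are cubic while $h(v) = (v^1)^2 + \cdots + (v^{n-1})^2$ and $R(v) = \mathbf{R}_y(v,v)$ are quadratic. The key observation is that $h$ is irreducible: any nontrivial factorization $h = fg$ would force $f,g$ to be linear homogeneous forms (by degree), but then $f=0$ would cut out a hyperplane in $V$ on which $h$ vanishes, contradicting positive definiteness. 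Since polynomial rings over a field are UFDs, the irreducible $h$ is prime, and from $h \mid CR$ one concludes $h \mid C$ or $h \mid R$.

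The two cases produce the two alternatives of the lemma. If $h \mid R$, a degree count forces $R = c \cdot h$ for some constant $c$; setting $\kappa = c/F^2(y)$ and using that $\mathbf{R}_y$ and $h_y$ both annihilate $y$, the identity extends to $\mathbf{R}_y = \kappa F^2 h_y$ on $T_xM$, so $y$ is a polar direction. If instead $h \mid C$, then $C = h \cdot L$ for a linear form $L(v) = a_i v^i$. Polarizing gives $C_{ijk} = \tfrac{1}{3}(h_{ij} a_k + h_{ik} a_j + h_{jk} a_i)$ on $V$, and tracing with $g^{ij}$ (using $g^{ij} h_{ij} = n-1$ and $g^{ij} F_{y^i} a_j = 0$, the latter from $y^i a_i = 0$) identifies $a_k = \tfrac{3}{n+1} I_k$, whence $\mathbf{M}_y = 0$.

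The only real obstacle lies in recognizing that (\ref{eq:Rh=CR}) encodes an algebraic divisibility statement; once this is noticed, the primality of $h$ delivers the dichotomy with no further computation. The hypothesis $\dim M \geq 3$ enters precisely to guarantee $\dim V \geq 2$, which is what makes $h$ irreducible over $\mathbb{R}$.
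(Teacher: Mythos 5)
Your proof is correct and follows essentially the same route as the paper: treat (\ref{eq:Rh=CR}) as a polynomial identity in $v$, use irreducibility (hence primality) of $h_y(v,v)$ to get $h_y(v,v)\mid \mathbf{R}_y(v,v)$ or $h_y(v,v)\mid \mathbf{C}_y(v,v,v)$, and identify the two cases with polar directions and vanishing Matsumoto torsion. The only differences are cosmetic: you restrict to the hyperplane $g_y(v,y)=0$ to make $h_y$ positive definite (the paper argues directly with the semi-positive rank $n-1$ form), and you supply the polarization-and-trace computation that the paper dismisses as well known.
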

\begin{proof}
Both sides of equation (\ref{eq:Rh=CR}) are polynomials
in the variable $v$.  Since $h_y$ is semi-positive of rank $\dim M -1$,  we find that $h_y(v,v)$ is irreducible when 
$\dim M\geq 3$.  Thus, (\ref{eq:Rh=CR}) implies that either $h_y(v,v)\,|\,\mathbf{R}_y(v,v)$ or $h_y(v,v)\,|\,\mathbf{C}_y(v,v,v)$.
The former is equivalent to saying that $y$ is a polar direction.  The latter is equivalent to saying that the 
Cartan tensor $\mathbf{C}_y$ is reducible.  It is well known that in this case 
\[
	C_{ijk} = \frac{1}{n+1}(h_{ij}I_k + h_{jk}I_i + h_{ki}I_j),
\]
so the Matsumoto torsion $\mathbf{M}_y$ vanishes.
\end{proof}

If a tangent vector $y$ satisfies $\mathbf{M}_y=0$, then we call $y$ a \emph{Matsumoto direction}.  The above lemma
simply says that every direction must belong to at least one of two cases:  polar, or Matsumoto. 

\begin{lemma}\label{lem:polar}
Suppose $(M,F)$ is a Finsler manifold of sectional flag curvature with $\dim M\geq 3$.  Fix a point $x$ in $M$.  
If the measure of polar directions in $T_xM$ is nonzero, then the flag curvature is isotropic at $x$;
if the measure of polar directions is zero, then the Minkowski norm $F|_x$ is of Randers type.
\end{lemma}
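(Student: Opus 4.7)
The plan is to exploit the dichotomy of Lemma \ref{lem:key} together with the fact that both the polar set $P_x$ and the Matsumoto set $M_x=\{y:\mathbf{M}_y=0\}$ are closed subsets of $T_xM\backslash\{0\}$ whose union is the entire punctured tangent space.

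For the measure-zero branch, I would observe that a closed set of measure zero has empty interior, so its complement is open and dense. By Lemma \ref{lem:key} this dense complement is contained in the closed set $M_x$, forcing $M_x=T_xM\backslash\{0\}$. Hence $\mathbf{M}_y\equiv 0$ at $x$, and the Matsumoto theorem recalled in Section~\ref{sec:pre} gives that $F|_x$ is of Randers type.

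For the positive-measure branch I would argue in two sub-steps. \emph{(a)} The constant of polarity $\kappa(y)$, defined on $P_x$ by $\mathbf{R}_y=\kappa(y)F^2 h_y$, is independent of $y$: whenever $y_1,y_2\in P_x$ are linearly independent, the sectional hypothesis gives $K(y_1,y_2)=K(y_2,y_1)$, and polarity of each $y_i$ reduces this to $\kappa(y_1)=\kappa(y_2)$; since $P_x$ has positive measure it cannot lie in any hyperplane, and $\kappa$ is homogeneous of degree zero in $y$, so a single constant $\kappa_0$ prevails on all of $P_x$. \emph{(b)} The identity $\mathbf{R}_y=\kappa_0 F^2 h_y$ then extends to every $y\in T_xM\backslash\{0\}$: given $y\notin P_x$, choose $v\in P_x\backslash\mathbb{R}y$ (possible by positive measure) and use the sectional identity $K(y,v)=K(v,y)=\kappa_0$ to obtain $R_{ij}(y)v^iv^j=\kappa_0 F^2(y)h_{ij}(y)v^iv^j$ for every $v$ in the positive-measure set $P_x\backslash\mathbb{R}y$. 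Both sides are quadratic polynomials in $v$, and a nonzero polynomial on $\mathbb{R}^n$ has Lebesgue-null zero-set, so the two polynomials coincide identically in $v$. Therefore $\mathbf{R}_y=\kappa_0 F^2 h_y$ holds for every $y$, so the flag curvature at $x$ equals the single value $\kappa_0$.

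The main obstacle is the extension step (b): bridging from the sparse set $P_x$ to all directions of $T_xM\backslash\{0\}$. Its resolution rests on the polynomiality of both $\mathbf{R}_y(v,v)$ and $F^2(y)h_y(v,v)$ as functions of the second argument $v$, which converts measure-theoretic agreement on $P_x\backslash\mathbb{R}y$ into an exact identity of quadratic forms. Step (a) is an easier preliminary, resting only on the abundance of linearly independent pairs that positive measure guarantees.
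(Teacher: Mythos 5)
Your proposal is correct and follows essentially the same route as the paper: the measure-zero case is handled by continuity of the Matsumoto torsion (density of the complement of the polar set), and the positive-measure case uses the flagpole-symmetry $K(y,v)=K(v,y)$ supplied by the sectional hypothesis together with the fact that a quadratic form vanishing on a set of positive measure vanishes identically. Your extra remarks (closedness of the polar set, homogeneity of $\kappa$) only make explicit what the paper leaves implicit.
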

\begin{proof}
Let $U$ be the set of polar directions in $T_xM$.  If the measure of $U$ is zero, 
then by Lemma \ref{lem:key} the Matsumoto torsion $\mathbf{M}_y$ vanishes whenever $y$ does not belong to $U$, 
and thus $\mathbf{M}_y$ vanishes everywhere by continuity.  As a result, $F|_x$ is a Randers norm.

Now suppose the measure of $U$ is nonzero.   
If $y$ is a polar direction, $\mathbf{R}_y(v,v) = \kappa(y) F^2(y) h_y(v,v)$, then the flag curvature $K(y,v) = \kappa(y)$ only
depends on the flagpole $y$.  For any two linearly independent polar directions $y_1$, $y_2$ in $U$, we have
$\kappa(y_1) = K(y_1, y_2) = K(y_2,y_1) = \kappa(y_2)$.  This shows that the flag curvature is constant in $U$.  Denote
this constant by $c$. 

Let $y$, $z$ be nonzero vectors in $T_xM$ which are polar and non-polar directions, respectively.  
Then $K(z,y) = K(y,z) = c$.  It follows that $\mathbf{R}_z(y,y) = c\cdot F^2(z)h_z(y,y)$ holds for any $y$ in $U$.  Thus,
the zero set of the quadratic form $R_z - c F^2(z)h_z$ has positive measure.  Consequently,  this quadratic form 
is identically zero;  $R_z(v,v) = c\cdot F^2(z)h_z(v,v)$ holds for any $v$ in $T_xM$.  This proves that the flag curvature
of any flag with flagpole $z$ equals $c$.
\end{proof}

\section{Proof of the main theorems}
\label{sec:prf}

Using the above lemmas, we now prove the main result of Chen-Zhao \cite{ChenZhao2010}.

\begin{theorem}\label{thm:cz}
Let $M$ be a manifold of dimension $\geq 3$, and $F=\alpha+\beta$ be a Randers metric on $M$ with $\beta\neq0$.
If $F$ is of sectional flag curvature, then it has constant flag curvature.
\end{theorem}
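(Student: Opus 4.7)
The plan is to show that for a Randers metric with $\beta\ne0$ and sectional flag curvature, every nonzero direction is polar; Lemma~\ref{lem:polar} together with Schur's lemma will then give constant flag curvature. I would begin by exploiting the vanishing of the Matsumoto torsion, which gives $\mathbf{C}_y(v,v,v)=\frac{3}{n+1}h_y(v,v)\mathbf{I}_y(v)$. Substituting into (\ref{eq:Rh=CR}) from Lemma~\ref{lem:eq}, the common factor $h_y(v,v)$ (irreducible as a polynomial in $v$ for $\dim M\ge3$, as used in Lemma~\ref{lem:key}) cancels, and Lemma~\ref{lem:extR} then yields the reduced identity
\begin{equation*}
\hat{\mathbf{R}}_y(v,v,v)=\frac{6}{n+1}\mathbf{I}_y(v)\mathbf{R}_y(v,v).
\end{equation*}

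Next I would polarize this cubic identity in $v$ to obtain a symmetric tensor equation for $R_{ij\cdot k}$, and compare it with Chern-Shen's formula~(\ref{eq:}). Since $\dot h_y=0$ and $C$ has the Matsumoto form, both $C_{ijm}R^m{}_k$ and $C_{ijk|0|0}$ reduce to algebraic combinations of $h$, $I$, the vector $J_p:=I_mR^m{}_p$, and $I_{p|0|0}$. Equating the two expressions for $\mathrm{Sym}(R_{ij\cdot k})$ produces an equation of the form
\begin{equation*}
4\bigl(I_lR_{mp}+I_mR_{pl}+I_pR_{lm}\bigr)=h_{lm}U_p+h_{mp}U_l+h_{pl}U_m,
\end{equation*}
where $U_p$ is an explicit linear combination of $J_p$ and $I_{p|0|0}$. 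Two successive contractions then extract the polar structure: contracting with $g^{mp}$ and using the standard vanishings $y^iI_i=0$, $R_{ij}y^j=0$, $h_{ij}I^j=I_i$ determines $U_p$ in terms of $I$, $J$, and the Ricci scalar; contracting afterwards with $I^l$ and invoking $|\mathbf{I}_y|^2>0$ (Deicke's theorem, guaranteed by $\beta\ne0$) then forces $J_p=\lambda I_p$ and $R_{mp}=\alpha h_{mp}$ for scalar functions $\alpha(y),\lambda(y)$. Thus every nonzero $y$ is polar, Lemma~\ref{lem:polar} yields isotropic flag curvature at every $x\in M$, and Schur's lemma for $\dim M\ge3$ upgrades this to constant flag curvature on $M$.

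The hardest part will be the final consistency check. After the two contractions one naturally arrives at the ansatz $R_{mp}=\alpha h_{mp}+\beta I_mI_p$, and the nontrivial step is to verify that $\beta$ vanishes by taking the Ricci trace and using the eigenvalue relation $\lambda=\mathrm{Ric}/(n-1)$. This is precisely where the Randers hypothesis enters through $|\mathbf{I}_y|^2\ne0$, and where the argument does not generalize directly to arbitrary Finsler metrics --- although the same mechanism is expected to motivate the approach to the more general Theorem~\ref{thm:main}.
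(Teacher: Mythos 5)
Your outline is correct and, up to the key identity, coincides with the paper's proof: both use the Matsumoto form of $\mathbf{C}_y$ together with Lemma \ref{lem:eq} and Lemma \ref{lem:extR}, cancel the factor $h_y(v,v)$, and arrive at the cubic relation whose polarization is your equation $4(I_lR_{mp}+I_mR_{pl}+I_pR_{lm})=h_{lm}U_p+h_{mp}U_l+h_{pl}U_m$ with $U_p=I_mR^m{}_p-3I_{p|0|0}$; this is exactly the polarization of the paper's displayed identity $h_y(v,v)\bigl(\mathbf{I}_y(\mathbf{R}_y(v))-3\ddot{\mathbf{I}}_y(v)\bigr)=4\mathbf{R}_y(v,v)\mathbf{I}_y(v)$. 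The difference is the endgame. The paper concludes in one line by unique factorization: $h_y(v,v)$ is irreducible and cannot divide the linear form $\mathbf{I}_y(v)\not\equiv0$, hence it divides $\mathbf{R}_y(v,v)$ and $y$ is polar. Your contraction scheme also closes, with one bookkeeping correction: contracting with $g^{mp}$ (using $U_py^p=0$) gives $U_p=\tfrac{4}{n+1}(\mathrm{Ric}\,I_p+2J_p)$; the relation $\lambda=\mathrm{Ric}/(n-1)$ is then obtained from the double contraction with $I^lI^m$, which simultaneously forces $J_p=\lambda I_p$ --- it does not come from the Ricci trace of the ansatz $R_{mp}=\alpha h_{mp}+\beta I_mI_p$, which is automatically satisfied and carries no information. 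Once $\lambda=\mathrm{Ric}/(n-1)$ is known, the single $I^l$-contraction yields $\|\mathbf{I}_y\|^2R_{mp}=\tfrac{\mathrm{Ric}}{n-1}\|\mathbf{I}_y\|^2h_{mp}$ outright, so the $\beta$-term never survives and you even get the explicit value $\kappa(y)=\mathrm{Ric}/\bigl((n-1)F^2\bigr)$, which the divisibility argument leaves implicit. One caveat, common to your argument and to the paper's: Deicke's theorem only guarantees $\mathbf{I}_y\neq0$ for some $y$, and for a Randers norm with $\beta\neq0$ one has $\mathbf{I}_y=0$ precisely along the two rays $y\parallel\pm b^{\sharp}$ (for instance $F=|y|+by^1$ at $y=e_1$). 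So both endgames establish polarity only off this measure-zero set of flagpoles; that is harmless, since Lemma \ref{lem:polar} needs only a set of polar directions of positive measure (or one concludes by continuity), and the rest of your argument --- isotropy via Lemma \ref{lem:polar} and constancy via Schur --- is exactly as in the paper.
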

\begin{proof}
The well-known theorem of Matsumoto \cite{Matsumoto} states that, in dimensions $\geq 3$,  
the metric $F$ is of Randers type if and only if the Matsumoto torsion vanishes, $\mathbf{M}_y=0$.
Equivalently
\[
	\mathbf{C}_y(u,v,w) = \frac{1}{n+1}\big(h_y(u,v)\mathbf{I}_y(w) +h_y(v,w)\mathbf{I}_y(u)+h_y(w,u)\mathbf{I}_y(v)\big).
\]
Thus we have
\begin{equation}\label{eq:t1}
	\mathbf{C}_y(v,v,\mathbf{R}_y(v)) = \frac{1}{n+1}\big(h_y(v,v)\mathbf{I}_y(\mathbf{R}_y(v)) + 2\mathbf{R}_y(v,v)\mathbf{I}_y(v)\big).
\end{equation}
Moreover,  the vanishing of Matsumoto torsion implies
\[
	\mathbf{C}_y(v,v,v)=\frac{3}{n+1}h_y(v,v)\mathbf{I}_y(v).
\]
Taking dynamical derivatives yields $\dot{\mathbf{C}}_y(v,v,v) = \frac{3}{n+1} h_y(v,v)\dot{\mathbf{I}}_y(v)$, and
\begin{equation}\label{eq:ddC}
	\ddot{\mathbf{C}}_y(v,v,v) = \frac{3}{n+1} h_y(v,v)\ddot{\mathbf{I}}_y(v).
\end{equation}
Substituting the above result into (\ref{eq:Rh=CR}) and rearranging,  we have
\[
	h_y(v,v)\big(\mathbf{I}_y(\mathbf{R}_y(v)) - 3\ddot{\mathbf{I}}_y(v)\big) = 4\mathbf{R}_y(v,v)\mathbf{I}_y(v).
\]
If $\beta\neq 0$, then $\mathbf{I}_y\neq 0$.  It follows from the above equation that $h_y(v,v) | \mathbf{R}_y(v,v)$,  
so $y$ is a polar direction.  As a result, we have proved that every direction is polar, thus the flag curvature is isotropic by 
Lemma \ref{lem:polar}.   Schur's lemma then ensures that the flag curvature is constant. 
\end{proof}

Based on the above theorem,  we finally arrive at the following conclusion.

\begin{theorem}
Let $(M,F)$ be a Finsler manifold of sectional flag curvature.  Then at any point $x$ in $M$, 
either the Minkowski norm $F|_x$ is Euclidean,  or the flag curvature is isotropic at $x$.
\end{theorem}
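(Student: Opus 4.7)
The plan is to combine Lemma~\ref{lem:polar} with the Randers-case Theorem~\ref{thm:cz} via a boundary-versus-interior argument on the locus where the Matsumoto torsion vanishes. Fix $x\in M$. If the polar directions at $x$ have positive measure, Lemma~\ref{lem:polar} gives isotropy of the flag curvature at $x$ immediately, so I may assume they have measure zero and hence $F|_x$ is of Randers type, i.e.\ $\mathbf{M}_y=0$ for every $y\in T_xM\setminus\{0\}$. If in addition $F|_x$ is Euclidean we are done, so the remaining task is to establish isotropy at $x$ under the assumption that $F|_x$ is Randers but not Euclidean; in that case the Minkowski Randers decomposition $F|_x=\alpha|_x+\beta|_x$ satisfies $\beta|_x\neq 0$.

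Introduce the closed subset $R=\{x'\in M:\mathbf{M}|_{x'}\equiv 0\}$ on which $F$ is pointwise Randers. The contrapositive of Lemma~\ref{lem:polar} shows that at every $x'\notin R$ the polar directions have positive measure, and hence the flag curvature is already isotropic at $x'$. The argument then splits according to whether our point $x\in R$ lies in the interior or on the boundary of $R$. If $x\in\mathrm{int}(R)$, on a connected open neighborhood $W$ of $x$ the Matsumoto torsion vanishes identically, and Matsumoto's theorem supplies a smooth Randers decomposition $F|_W=\alpha+\beta$. Since $\beta|_x\neq 0$ and $\beta$ is a smooth one-form, after shrinking $W$ I may assume $\beta$ is nowhere zero on $W$; then Theorem~\ref{thm:cz} applied to the Randers manifold $(W,F|_W)$ yields constant flag curvature on $W$, in particular isotropy at $x$.

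If instead $x$ lies on the boundary of $R$, choose a sequence $x_n\to x$ with $x_n\notin R$, so that by the previous paragraph the flag curvature at $x_n$ equals a single scalar $c_n$. Extending any flag $(y,v)$ at $x$ to a smoothly varying family $(y(x'),v(x'))$ defined near $x$, smoothness of the flag curvature gives $c_n=K(x_n;y(x_n),v(x_n))\to K(x;y,v)$; since this common limit does not depend on the chosen flag at $x$, the flag curvature is isotropic at $x$. The only genuinely non-routine step is this final continuity argument, but it reduces to the observation that the constant isotropic values at nearby non-Randers points can all be detected on one and the same smoothly varying flag.
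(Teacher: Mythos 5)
Your proposal is correct, and it rests on the same two pillars as the paper's own argument: Lemma \ref{lem:polar} at points where the polar directions have positive measure, Theorem \ref{thm:cz} on the pointwise-Randers locus, and a continuity argument to bridge the two regimes. The only real difference is in how the pointwise conclusions are glued together. The paper writes $M=M_1\cup M_2$ according to whether the polar directions have positive or zero measure, dismisses the case where $M_2$ has measure zero ``by continuity'', and otherwise applies Theorem \ref{thm:cz} directly to $(M_2,F)$; this is stated loosely, since $M_2$ need not be open (hence need not be a Finsler manifold to which Theorem \ref{thm:cz} literally applies) and $\beta$ could still vanish at some of its points. Your treatment is tighter at exactly these spots: you pass to the closed Matsumoto locus $R$, invoke Theorem \ref{thm:cz} only on a connected open $W\subseteq\mathrm{int}(R)$ shrunk so that $\beta$ is nowhere zero (the smoothness of the decomposition on $W$ indeed follows, e.g.\ from $\alpha(y)=\frac{1}{2}\big(F(y)+F(-y)\big)$, $\beta(y)=\frac{1}{2}\big(F(y)-F(-y)\big)$), and your boundary step---evaluating the isotropic values $c_n$ at nearby non-Randers points along one smoothly varying flag and using continuity of $K$ where $v$ is not proportional to $y$---is precisely the rigorous form of the paper's one-line continuity remark. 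The only addition I would make is to state at the outset that one may assume $\dim M\geq 3$ (the two-dimensional case being trivial, as the introduction observes), since Lemma \ref{lem:polar}, Matsumoto's theorem and Theorem \ref{thm:cz} all require it.
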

\begin{proof}
As we have shown, one only needs to consider the case $\dim M\geq 3$.  By Lemma \ref{lem:key}, every nonzero tangent vector $y$
is either a polar direction, or a Matsumoto direction.

We write $M=M_1\cup M_2$, where $M_1$ is the set of points $x$ such that the measure of polar directions in $T_xM$ is nonzero,
and $M_2$ is the set of points such that the measure of polar directions is zero.

If $x$ belongs to $M_1$,  then Lemma \ref{lem:polar} ensures that the flag curvature is isotropic at $x$.  So we only need to consider
the case $x\in M_2$.  If the measure of $M_2$ is zero, then by continuity, the flag curvature is isotropic everywhere.  If the measure 
of $M_2$ is nonzero, then $(M_2,F)$ is a Randers manifold.  By theorem \ref{thm:cz},  either the metric reduces to a Riemannian one,
or the flag curvature is constant.  Thus the theorem is proved.
\end{proof}

From the above proof,  it seems possible that a Finsler manifold of sectional flag curvature is glued by two manifolds,  of which
one is a Riemannian manifold and the other is a non-Riemannian manifold of isotropic flag curvature.   However, we could not
find a concrete example.


\begin{thebibliography}{99}
\bibitem{BCS} D.~Bao, S.-S.~Chern, Z.~Shen, \emph{An introduction to Riemann-Finsler geometry}, 
Graduate Texts in Mathematics \textbf{200}. Springer, New York, 2000.
\bibitem{BR2004} D.~Bao, C.~Robles, \emph{On Randers metrics of constant curvature}. Rep. Math. Phys. \textbf{51}(2003), 9--42.
\bibitem{BRS} D.~Bao, C.~Robles, Z.~Shen, \emph{Zermelo navigation on Riemannian manifolds}. J. Diff. Geom. \textbf{66}(2004), 377--435.
\bibitem{Bryant} R.~L.~Bryant, \emph{Some remarks on Finsler manifolds with constant flag curvature}. Special issue for S.-S.~Chern, 
Houston J. Math. \textbf{28}(2002), 221--262.
\bibitem{ChenZhao2010} B.~Chen, L.~Zhao, \emph{Randers metrics of sectional flag curvature}. Houston J. Math. \textbf{36}(2010) 1,
55--67.
\bibitem{ChenZhaoDGA} B.~Chen, L.~Zhao, \emph{Finsler metrics of nonzero sectional flag curvature}. Diff. Geom. Appl. \textbf{28}(2010), 33--39.
\bibitem{ChernShen} S.-S.~Chern, Z.~Shen, \emph{Riemann-Finsler geometry}, Nankai Tracts in Mathematics \textbf{6}. World Scientific Publisher, Singapore, 2005.
\bibitem{Matsumoto} M.~Matsumoto, \emph{On $C$-reducible Finsler spaces}.  Tensor, N.S. \textbf{24}(1972), 29--37.
\bibitem{Shen} Z.~Shen, \emph{Projectively flat Finsler metrics of constant flag curvature}.  Trans. Amer. Math. Soc. \textbf{355}(2003) 4, 
1713--1728.
\end{thebibliography}
\end{document}